\documentclass[12pt,scheme=plain,fontset=none]{ctexart}
\usepackage[T1]{fontenc}
\usepackage[utf8]{inputenc}
\usepackage{lmodern}

\usepackage[a4paper,margin=1in]{geometry}
\usepackage{amsmath,amssymb,amsthm,mathtools}
\usepackage{enumitem}
\usepackage{microtype}
\usepackage{xcolor}
\usepackage{hyperref}

\hypersetup{
  colorlinks=true,
  linkcolor=blue,
  citecolor=blue,
  urlcolor=blue
}

\numberwithin{equation}{section}

\AtBeginDocument{%
  \setlength{\abovedisplayskip}{6.5pt plus 2pt minus 2pt}%
  \setlength{\belowdisplayskip}{6.5pt plus 2pt minus 2pt}%
  \setlength{\abovedisplayshortskip}{4pt plus 2pt minus 2pt}%
  \setlength{\belowdisplayshortskip}{4pt plus 2pt minus 2pt}%
}

\theoremstyle{plain}
\newtheorem{theorem}{Theorem}[section]
\newtheorem{corollary}[theorem]{Corollary}
\newtheorem{conjecture}[theorem]{Conjecture}
\newtheorem{proposition}[theorem]{Proposition}
\newtheorem{lemma}[theorem]{Lemma}
\theoremstyle{definition}
\newtheorem{remark}[theorem]{Remark}
\newtheorem*{remark*}{Remark}

\newcommand{\R}{\mathbb R}
\newcommand{\N}{\mathbb N}
\newcommand{\Z}{\mathbb Z}

\newcommand{\E}{\mathbb E}

\title{A Walsh--Quotient Obstruction for Fourier Frames\\
on Odd Reciprocal-Power Bernoulli Convolutions}
\author{Xiao-Ye Fu, Zi-Jian Song, and Wei-Jie Wang}
\date{}

\begin{document}
\maketitle

\begin{abstract}
We introduce a Walsh--quotient obstruction to study Fourier-frame
existence for symmetric two-branch Bernoulli convolutions
\[
\mu_{\rho,d}
=\ast_{j=1}^{\infty}
\frac12\bigl(\delta_{-d\rho^{j}/2}+\delta_{d\rho^{j}/2}\bigr),
\qquad 0<\rho<1,\quad d>0.
\]
Suppose that $0<\rho<\frac12$ and $\rho^{-m}=B$ for some integer
$m\ge1$ and odd integer $B\ge3$. We prove that
$L^2(\mu_{\rho,d})$ admits no Fourier frame. For $m=1$, our argument
proves the nonexistence of Fourier frames for odd-integer-base
Cantor measures and hence resolves Strichartz's long-standing open
problem for the middle-third Cantor measure. A contemporaneous
independent proof of the case $m=1$ was obtained by Pont, Liehr and
Taylor [arXiv:2607.08656v1]. For $m>1$, our theorem includes the non-integer
reciprocal-power contraction ratios $\rho=B^{-1/m}$, which fall
outside the classical integer-base Cantor-measure setting. Our proof
is self-contained. It uses finite-coordinate Walsh packets to
transform the frame inequalities into incompatible tangent-quotient
estimates, while the identity $\rho^{-m}=B$ supplies the exact
$m$-step scale relation leading to the contradiction.
\end{abstract}

\noindent\textbf{Keywords.} Bernoulli convolutions; Fourier frames; Cantor measures; spectral measures; Walsh functions; tangent quotients.

\medskip
\noindent\textbf{2020 Mathematics Subject Classification.} 42C15, 42C40, 28A80, 42B10.

\section{Introduction}

Fourier frames address whether a system of exponentials provides stable, though not necessarily orthogonal, expansions in a Hilbert space.  For singular self-similar measures, this question is substantially finer than spectrality.  The standard middle-third Cantor measure is the canonical test case.  We denote by $\mu_{\mathrm C}$ the self-similar probability measure determined by
\[
  \mu_{\mathrm C}
  =\frac12\,\mu_{\mathrm C}\circ\tau_0^{-1}
   +\frac12\,\mu_{\mathrm C}\circ\tau_2^{-1},
  \qquad
  \tau_0(x)=\frac{x}{3},\quad
  \tau_2(x)=\frac{x+2}{3}.
\]
Equivalently, $\mu_{\mathrm C}$ is the distribution of
\(
  \sum_{j=1}^{\infty}\frac{2\varepsilon_j}{3^j},
\)
where $\varepsilon_1,\varepsilon_2,\ldots$ are independent random variables, each uniformly distributed on $\{0,1\}$.  Jorgensen and Pedersen~\cite{JorgensenPedersen1998} proved that $\mu_{\mathrm C}$ is non-spectral: in fact, $L^2(\mu_{\mathrm C})$ does not contain three mutually orthogonal exponentials. Motivated by this negative resolution, Strichartz~\cite{Strichartz2000} subsequently asked whether the weaker Fourier-frame property might nevertheless hold for $\mu_{\mathrm C}$.  This question remained open for more than two decades and continued to serve as a basic test problem in fractal Fourier analysis.  Related developments on spectra and Fourier frames for Cantor-type and fractal measures can be found in \cite{DutkayHanSun2009,DutkayHanWeber2014,DutkayJorgensen2007,DutkayLaiWang2017,Herr2017,LabaWang2002,Lev2018}.  While the present manuscript was under final revision, Pont, Liehr and Taylor~\cite{PontLiehrTaylor2026} independently proved the non-Fourier-frame result for odd integer-base Cantor measures; the precise relation between the two works is explained in Remark~\ref{rem:relation-plt}.

\vspace{0.2cm}

In this paper, we introduce a new technical obstruction, termed the \emph{Walsh--quotient obstruction}, for the Fourier-frame problem.  The method is developed for the symmetric two-branch Bernoulli convolution
\begin{equation}\label{eq:murhod-intro}
  \mu_{\rho,d}
  =\ast_{j=1}^{\infty}
  \frac12\left(\delta_{-d\rho^j/2}+\delta_{d\rho^j/2}\right),
  \qquad 0<\rho<1,
  \quad d>0.
\end{equation}
Equivalently, $\mu_{\rho,d}$ is the distribution of
\begin{equation}\label{eq:murhod-random-series-intro}
  \sum_{j=1}^{\infty}\frac{d\xi_j\rho^j}{2},
  \qquad
  \xi_j\in\{-1,1\},\quad
  \mathbb P(\xi_j=-1)=\mathbb P(\xi_j=1)=\frac12,
\end{equation}
where $\xi_1,\xi_2,\ldots$ are independent sign variables.

\vspace{0.2cm}

Replacing $\xi_j$ by $2\varepsilon_j-1$ shows that the usual one-sided Cantor-type measure, namely the distribution of $\sum_{j=1}^{\infty}d\varepsilon_j\rho^j$, is the translate of $\mu_{\rho,d}$ by $d\rho/[2(1-\rho)]$.  By the affine invariance property proved in Lemma~\ref{lem:affine}, translation preserves Fourier-frame spectra and their frame bounds, as well as spectrality and the existence of exponential Riesz bases.  Thus the symmetric two-branch Bernoulli convolution and the corresponding Cantor-type measure are equivalent for all questions considered below, and we work with the symmetric normalization throughout the paper.

\vspace{0.2cm}

We use the convolution form \eqref{eq:murhod-intro} and the random-series form \eqref{eq:murhod-random-series-intro} interchangeably.  For an integer base $b\ge2$, write $\mu_{b,d}:=\mu_{1/b,d}$.  With the Fourier-transform convention $\widehat\sigma(\lambda)=\int_{\R} e^{-2\pi i\lambda x}\,d\sigma(x)$, the Fourier transform of $\mu_{\rho,d}$ factors as the infinite product of cosine masks
\[
  \widehat{\mu_{\rho,d}}(\lambda)
  =\prod_{j=1}^{\infty}\cos(\pi d\lambda\rho^j).
\]
For $d=1$, \eqref{eq:murhod-intro} is the standard symmetric normalization of the Bernoulli convolution with contraction ratio $\rho$; the parameter $d>0$ is only a dilation.  Dai~\cite{Dai2012} completely classified spectrality in this family: after transfer to the present normalization, $\mu_{\rho,d}$ is spectral if and only if $\rho=1/(2k)$ for some $k\in\mathbb N$.  Dai, He and Lau~\cite{DaiHeLau2014} later obtained the corresponding classification for $N$-Bernoulli measures.  These results settle the exponential orthonormal-basis problem, but they do not settle the Fourier-frame problem, since nonspectrality alone does not imply the nonexistence of a Fourier frame.

\vspace{0.2cm}

Our main result gives a uniform obstruction whenever an odd integer appears after a finite number of scale steps.
\begin{samepage}
\begin{theorem}\label{thm:level-shift-main}
Let $0<\rho<1/2$ and let $d>0$.  Suppose that there are an integer $m\ge1$ and an odd integer $B\ge3$ such that $\rho^{-m}=B$.  Then $L^2(\mu_{\rho,d})$ admits no Fourier frame. 
\end{theorem}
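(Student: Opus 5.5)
We argue by contradiction. Suppose $\Lambda\subset\R$ is a Fourier frame for $L^2(\mu_{\rho,d})$ with bounds $0<A\le C<\infty$. By the affine invariance of Lemma~\ref{lem:affine} we may rescale and take $d=1$. Since $\rho^{-m}=B$, applying the self-similarity $m$ times gives the factorization
\[
  \mu_{\rho,1}=\nu_m * (\mu_{\rho,1}\circ D_{B}),
\]
where $\nu_m$ is the finite measure supported on the $2^m$ points $\sum_{j=1}^m \xi_j\rho^j/2$, and $D_B$ denotes contraction by $\rho^m=B^{-1}$. In particular,
\[
  \widehat{\mu_{\rho,1}}(\lambda)=\widehat{\nu_m}(\lambda)\,\widehat{\mu_{\rho,1}}(\lambda/B).
\]
The test functions will be finite-coordinate Walsh packets. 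For a finite set $F\subset\N$ and a function $g$ of the later coordinates, set $f=\prod_{j\in F}\xi_j\cdot g$. Independence of the $\xi_j$ turns $\int f(x)e^{-2\pi i\lambda x}\,d\mu$ into a product: a factor $\cos(\pi\lambda\rho^j)$ for each $j\notin F$, and a factor $-i\sin(\pi\lambda\rho^j)$ for each $j\in F$. The frame inequality for such packets therefore says that
\[
  \sum_{\lambda\in\Lambda}\ \prod_{j\in F}\tan^2(\pi\lambda\rho^j)\,\bigl|\widehat{\mu}(\lambda)\,\widehat{g}(\cdot)\bigr|^2
\]
lies between $A\|g\|^2$ and $C\|g\|^2$. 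Comparing the packets for $F$ and for $\emptyset$ on the same fibre yields two-sided "tangent-quotient" estimates.

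The key steps, in order, are as follows.

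(1) First we reduce to a scale-invariant statement. Using the self-similarity we push $f$ to level $n$: $f(x)=h(\xi_1,\dots,\xi_n)\,g(\text{tail})$. The frame inequality then becomes a family of weighted frame inequalities for the tail measure, which is a copy of $\mu$. The weights are $|\sum_{\epsilon} h(\epsilon)e^{-2\pi i\lambda\sum_j\epsilon_j\rho^j/2}|^2$, and they are averaged over $\lambda\in\Lambda$ rescaled by $\rho^n$. Taking weak limits of the rescaled sets $\rho^{n}\Lambda$, with a Landau-type density argument to ensure the limits are nonempty and uniformly discrete in the relevant sense, we obtain limiting frame-type measures that satisfy the same inequalities at every level.

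(2) Next we choose $h$ to be Walsh characters on blocks of $m$ coordinates. The inequalities for the packet $F$ and for the constant packet give, for the limiting spectral mass, an upper and a lower bound on the $\Lambda$-average of $\prod_{j\in F}\tan^2(\pi\lambda\rho^j)$, with constants $A/C$ and $C/A$ independent of $|F|$.

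(3) Finally we exploit $\rho^{-m}=B$ with $B$ odd. Writing $\theta=\pi\lambda\rho^{j+m}$, we have $\tan(\pi\lambda\rho^{j})=\tan(B\theta)$. Because $B$ is odd, $\tan(B\theta)/\tan\theta$ is a rational function of $\tan^2\theta$ that is bounded away from $1$ in an averaged sense: its geometric mean over any period is $B^{\pm1}$-distorted, and it vanishes or blows up exactly at the $B$-adic points. Iterating over disjoint blocks $F=\{j,j+m,\dots,j+km\}$, the telescoping product of tangent quotients grows or decays geometrically in $k$. The bounds from (2), however, are uniform in $k$, and this is the contradiction.

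The condition $\rho<1/2$ enters in two ways. It guarantees that $\mu$ has no overlaps, so that the Walsh coordinates are functions in $L^2(\mu)$. It also makes the tangents at the first coordinates controlled, since $|\pi\lambda\rho^j|$ is small on the relevant range.

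I expect step (3) to be the main obstacle. The tangent-quotient cancellation has to be shown to fail uniformly, and the frame measure could in principle concentrate on $\lambda$ for which $B\theta$ sits near the zeros of the cosine masks. My first attempt would be to control this by the lower frame bound applied to packets localized near those $B$-adic resonances, which forces mass away from them. If that fails, I would fall back on a convexity (Jensen) argument on $\log|\tan|$, using that the average of $\log|\tan(B\theta)/\tan\theta|$ against any measure that is invariant under the scaling by $B$ is strictly nonzero, precisely because $B$ is odd.
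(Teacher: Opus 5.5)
\paragraph{There is a genuine gap in Step~(3).} Step~(3) carries the whole argument, and the claims it rests on are false. Pointwise, $|\tan(B\theta)/\tan\theta|$ tends to $B$ as $\theta\to0$ and to $1/B$ as $\theta\to\pi/2$. It blows up at $\theta=\pi/(2B)$ and vanishes at $\theta=\pi/B$, so it takes every value in $(0,\infty)$. Hence the products over blocks $\{j,j+m,\dots,j+km\}$ have no geometric growth or decay.

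The averaged version fails as well. Lebesgue measure on $\R/\pi\Z$ is invariant under $\theta\mapsto B\theta$. For any such invariant $\nu$ with $\log|\tan|\in L^1(\nu)$, invariance gives $\int(\log|\tan B\theta|-\log|\tan\theta|)\,d\nu=0$. So the geometric mean over a period is $1$, not $B^{\pm1}$, and your Jensen fallback asserts the opposite of what is true. Moreover, nothing makes the frame-induced weights invariant under scaling by $B$.

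Step~(1) has two problems. It is unsupported: no Landau-type density theory makes weak limits of $\rho^n\Lambda$ nonempty and uniformly discrete for this singular measure. It is also unnecessary, since testing $W_F$ against the original frame already gives bounds uniform in $F$.

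\paragraph{The missing mechanism.} The contradiction comes from combining the two frame bounds using only $|F|\le2$. Put $\theta_s=\pi d\lambda\rho^s$, $u_s=\tan\theta_s$, and $\eta=\sum_{\lambda\in\Lambda}|\widehat{\mu_{\rho,d}}(\lambda)|^2\delta_\lambda$; $\eta$ is finite by testing $f\equiv1$.
\begin{itemize}
\item The lower bound on $W_{\{s\}}$ gives $\int|u_s|^2\,d\eta\ge A$ for $s>m$. At poles of $u_s$ the coefficient vanishes, because for odd $B$, $\cos\theta_s=0$ forces $\cos\theta_{s-m}=\cos(B\theta_s)=0$.
\item Since $u_s\to0$ pointwise and $\eta$ is finite, dominated convergence shows the part of this integral over $\{|u_s|<R\}$ tends to $0$. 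So the mass must sit near the poles, where $|u_s|\ge R$.
\item There the relevant feature of odd $B$ is that poles propagate to poles with comparable size: $\tan(B\theta)\sim\tan\theta/B$ as $\theta\to\pi/2$, so $|u_{s-m}|\ge c|u_s|$.
\item The upper bound on $W_{\{s-m,s\}}$ then gives $c^2\int_{\{|u_s|\ge R\}}|u_s|^4\,d\eta\le C$. Hence $\int_{\{|u_s|\ge R\}}|u_s|^2\,d\eta\le Cc^{-2}R^{-2}$ uniformly in $s$, which contradicts the lower bound $A$ once $R$ is large.
\end{itemize}
Neither this pole-propagation property nor this uniform-integrability argument appears in your proposal, so as written it does not prove the theorem.
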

\end{samepage}

The constraint $0<\rho<1/2$ exactly characterizes the non-overlapping condition of $\mu_{\rho,d}$ that guarantees finite-coordinate Walsh packets defined in \eqref{eq:walsh-packet} are well-defined functions in $L^2(\mu_{\rho,d})$. For $m=1$, Theorem~\ref{thm:level-shift-main} gives the odd reciprocal-integer case.  When $m>1$, it includes genuinely non-integer contraction ratios $\rho=B^{-1/m}$ whenever $B$ is not an $m$th power.  Our proof relies on the exact relation induced by the identity
		$\rho^{-m}=B$ and does not necessitate reduction to integer-base Cantor measures.

\begin{remark}[Relation to Pont-Liehr-Taylor's non-Fourier-frame result for odd integer-bases]
\label{rem:relation-plt}
During the preparation of the present manuscript,  Pont, Liehr and Taylor~\cite{PontLiehrTaylor2026} posted their independent proof that odd integer-base Cantor measures do not admit Fourier frames.  Their result treats the case $\rho=1/b$ with $b$ odd, and therefore includes the middle-third Cantor measure. Our argument does not depend on their theorem; we develop a separate Walsh--quotient obstruction that applies to a wider family of contraction ratios satisfying $\rho^{-m}=B$, where $B\ge3$ is odd and $m\ge1$. When $m=1$, our result recovers their non-Fourier-frame conclusion for odd integer-bases, while the cases $m>1$ include new nonexistence results for non-integer reciprocal-power contraction ratios under the non-overlap condition $0< \rho<1/2$. 
\end{remark}

Combining Theorem~\ref{thm:level-shift-main} with Dai's spectrality theorem \cite{Dai2012} yields a full characterization of Fourier frames for reciprocal-integer contraction ratios. Precisely, for every integer $b\ge2$ and every $d>0$, the space $L^2(\mu_{b,d})$ admits a Fourier frame  if and only if $b$ is even. Spectrality holds for all even bases as a direct consequence of Dai's theorem, while the nonexistence of Fourier frames for odd bases can be obtained by taking $m=1$ and $B=b$ in Theorem~\ref{thm:level-shift-main}.
The standard middle-third Cantor measure is an affine copy of $\mu_{3,2}=\mu_{1/3,2}$, one member of the family $\mu_{b,d}$. Theorem~\ref{thm:level-shift-main} with $m=1$ and $B=3$, together with Lemma~\ref{lem:affine}, yields the following corollary.

\begin{corollary}
	The standard middle-third Cantor measure admits no Fourier frames. 
\end{corollary}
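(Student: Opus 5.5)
The corollary follows directly from Theorem~\ref{thm:level-shift-main} and the affine invariance Lemma~\ref{lem:affine}; the work is to identify the middle-third Cantor measure with an affine image of a member of the family $\mu_{\rho,d}$.

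\emph{Step 1: identify the measure.} By definition $\mu_{\mathrm C}$ is the distribution of $X=\sum_{j\ge1}2\varepsilon_j3^{-j}$ with $\varepsilon_j$ i.i.d.\ uniform on $\{0,1\}$. Put $\xi_j=2\varepsilon_j-1$, so that $\xi_j$ are i.i.d.\ uniform on $\{-1,1\}$. Then
\[
X=\sum_{j\ge1}\frac{(\xi_j+1)}{3^j}=\sum_{j\ge1}\frac{2\xi_j(1/3)^j}{2}+\frac12 .
\]
Comparing with \eqref{eq:murhod-random-series-intro} for $\rho=1/3$ and $d=2$, we see that $\mu_{\mathrm C}$ is the translate of $\mu_{1/3,2}=\mu_{3,2}$ by $1/2$. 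This agrees with the general translation $d\rho/[2(1-\rho)]=1/2$ stated in the introduction.

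\emph{Step 2: apply the theorem.} Take $\rho=1/3$, $m=1$ and $B=3$. Then $0<\rho<1/2$, $B\ge3$ is odd, and $\rho^{-m}=3=B$. So all hypotheses of Theorem~\ref{thm:level-shift-main} hold, and $L^2(\mu_{3,2})$ admits no Fourier frame.

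\emph{Step 3: transfer by translation.} Suppose, for contradiction, that $\Lambda$ were a frame spectrum for $\mu_{\mathrm C}$. The map $f\mapsto f(\cdot+1/2)$ is a unitary from $L^2(\mu_{\mathrm C})$ onto $L^2(\mu_{3,2})$, and it sends each exponential $e_\lambda$ to the unimodular multiple $e^{2\pi i\lambda/2}e_\lambda$. Unimodular factors do not change the frame inequalities, so Lemma~\ref{lem:affine} makes $\Lambda$ a frame spectrum for $\mu_{3,2}$ with the same bounds. This contradicts Step 2.

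The only point needing care is the sign and normalization bookkeeping in Step 1. All the real difficulty is contained in Theorem~\ref{thm:level-shift-main}, which we assume.
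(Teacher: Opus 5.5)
Your proposal is correct and follows essentially the same route as the paper. You apply Theorem~\ref{thm:level-shift-main} with $\rho=1/3$, $d=2$, $m=1$, $B=3$, and then transfer the conclusion via Lemma~\ref{lem:affine}. You also make the paper's ``affine copy'' explicit as the translation by $d\rho/[2(1-\rho)]=1/2$, and that bookkeeping checks out.
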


To set our results in perspective, we briefly survey closely related work in the literature.
He, Lai and Lau~\cite{HeLaiLau2013} proved that a measure admitting a Fourier frame must be of pure type, while Dutkay and Lai~\cite{DutkayLai2014} established strong uniformity conditions for frame measures. These results do not exclude the equal-weight Cantor measures considered here. On the other hand, nonspectrality alone does not preclude the existence of Fourier frames: Lai and Wang~\cite{LaiWang2017} constructed a singular fractal measure with only finitely many mutually orthogonal exponentials, yet this measure still admits a Fourier frame. Non-Fourier-frame results in other geometric settings include surface-carried measures~\cite{IosevichLaiLiuWyman2022} and one-dimensional Salem measures~\cite{LiLiu2025}. Separately, Picioroaga and Weber~\cite{PicioroagaWeber2017} obtained Parseval frames of weighted exponentials for the scale-four Cantor measure; these weighted systems are more general than the unweighted Fourier frames considered here.

\vspace{0.2cm}

To this end, we sketch the main mechanism underlying our proof. Within the symbolic representation of $\mu_{\rho,d}$, finite-coordinate Walsh packets are defined by
\[
w_F(\xi)=\prod_{j\in F}\xi_j,
\qquad F\subset\mathbb N\text{ finite}.
\]
Testing a hypothetical Fourier frame against these Walsh packets produces the cosine and sine masks
\[
m_j(\lambda)=\cos(\pi d\lambda\rho^j),
\qquad
n_j(\lambda)=-i\sin(\pi d\lambda\rho^j).
\]
Assume that $\Lambda$ is a Fourier frame spectrum for $\mu_{\rho,d}$, with lower and upper frame bounds $A$ and $C$, respectively.  Its counting measure $\Gamma_\Lambda:=\sum_{\lambda\in\Lambda}\delta_\lambda$ is a frame measure for $\mu_{\rho,d}$ in the terminology of \cite{DutkayHanWeber2014,DutkayLai2014}.  Weighting $\Gamma_\Lambda$ by the squared Fourier transform gives the frame-induced measure
\begin{equation}\label{eq:eta-def}
	\eta
	:=|\widehat{\mu_{\rho,d}}|^2\,\Gamma_\Lambda
	=\sum_{\lambda\in\Lambda}
	|\widehat{\mu_{\rho,d}}(\lambda)|^2\,\delta_\lambda.
\end{equation}
The upper frame inequality applied to the constant function gives $\eta(\R)\le C$, so $\eta$ is finite.
Away from the zero set of $m_j$, their quotient is
\[
	\frac{n_j(\lambda)}{m_j(\lambda)}
	=-i\tan(\pi d\lambda\rho^j)
	=:-iu_j(\lambda).
\]
The zero set of $m_j$ is an $\eta$-null set and therefore it does not affect the relevant integrals.  Testing the frame inequalities on one-coordinate and two-coordinate Walsh packets yields that, for every $s>m$,
\[
\int_{\R}|u_s(\lambda)|^2\,d\eta(\lambda)\ge A,
\qquad
\int_{\R}|u_{s-m}(\lambda)u_s(\lambda)|^2\,d\eta(\lambda)\le C.
\]
The algebraic identity $\rho^{-m}=B$ implies that, off an explicit set of poles, 
\[
u_{s-m}=T_B(u_s),
\qquad
T_B(u)=\tan(B\arctan u).
\]
Since $B$ is an odd integer, large magnitudes of $u_s$ necessarily force comparable large magnitudes of $u_{s-m}$.  Combining this with the product integral bound then implies $\int |u_s|^2\,d\eta\to0$, contradicting the uniform lower bound on the integral.  This incompatibility between the two frame inequalities  constitutes the Walsh--quotient obstruction to the existence of Fourier frames.

\vspace{0.2cm}

The paper is organized as follows.  In Section~\ref{sec:preliminaries}, we introduce and review some foundational background: Fourier frame theory, affine invariance properties, the symbolic dynamical model for $\mu_{\rho,d}$, and the construction of Walsh packets. In Section~\ref{sec:obstruction},  we prove the abstract $L^2$-decay lemma and the tangent dynamics generated by $\rho^{-m}=B$. In Section~\ref{sec:walsh-frame}, we establish the Walsh product formulae and their implications for frame theory, and complete the proof of Theorem~\ref{thm:level-shift-main}. In Section~\ref{sec:scope}, we discuss the scope of the method and record the remaining non-overlapping conjecture.

\section{Preliminaries}\label{sec:preliminaries}

Let $\mu$ be a compactly supported Borel probability measure on $\R$.  We equip $L^2(\mu)$ with the inner product
\[
  \langle f,g\rangle_{L^2(\mu)}
  =\int_{\R} f(x)\overline{g(x)}\,d\mu(x),
  \qquad f,g\in L^2(\mu),
\]
and write $e_\lambda(x)=e^{2\pi i\lambda x}$ for $\lambda\in\R$.  A countable set $\Lambda\subset\R$ is said to be a Fourier frame spectrum for $\mu$ if there exist constants $0<A\le C<\infty$ such that
\begin{equation}\label{eq:frame-def}
  A\|f\|^2_{L^2(\mu)}
  \le
  \sum_{\lambda\in\Lambda}
  |\langle f,e_\lambda\rangle_{L^2(\mu)}|^2
  \le
  C\|f\|^2_{L^2(\mu)},
  \qquad f\in L^2(\mu).
\end{equation}
The two-sided estimate in \eqref{eq:frame-def} is called the frame inequality, with $A$ and $C$ referred to as the lower and upper frame bounds, respectively.  A spectral measure is one for which $L^2(\mu)$ admits an orthonormal basis of exponentials.  Every exponential orthonormal basis is a Fourier frame with bounds $A=C=1$, and, more generally, every exponential Riesz basis is a Fourier frame.  For a comprehensive introduction to frames and Riesz bases, we refer readers to~\cite{Christensen2016}.

\vspace{0.2cm}

We state an elementary invariance result that will be used repeatedly.
\begin{lemma}\label{lem:affine}
Let $T(x)=ax+c$ with $a\ne0$, and let $\widetilde\mu=T_\#\mu$ denote the pushforward measure of $\mu$ under $T$.  Then $L^2(\mu)$ admits a Fourier frame if and only if $L^2(\widetilde\mu)$ does.  This invariance property holds for exponential orthonormal bases and exponential Riesz bases.
\end{lemma}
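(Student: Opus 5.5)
We prove the lemma by transporting exponentials through the unitary composition map induced by $T$. Define $U:L^2(\widetilde\mu)\to L^2(\mu)$ by $Uf=f\circ T$. By the change-of-variables formula for pushforward measures,
\[
\int_{\R} |f\circ T|^2\,d\mu=\int_{\R}|f|^2\,d\widetilde\mu ,
\]
so $U$ is an isometry. Since $T$ is a bijection with inverse $T^{-1}(y)=(y-c)/a$ and $\mu=(T^{-1})_\#\widetilde\mu$, the map $g\mapsto g\circ T^{-1}$ is a two-sided inverse of $U$. Hence $U$ is unitary, and it preserves inner products.

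The key computation is the action of $U$ on exponentials:
\[
(Ue_\lambda)(x)=e^{2\pi i\lambda(ax+c)}=e^{2\pi i\lambda c}\,e_{a\lambda}(x).
\]
So $U$ maps $e_\lambda$ to a unimodular multiple of $e_{a\lambda}$. Let $\Lambda$ be a Fourier frame spectrum for $\widetilde\mu$ with bounds $A,C$. For $g\in L^2(\mu)$, put $f=U^{-1}g$. Then
\[
|\langle g,e_{a\lambda}\rangle_{L^2(\mu)}|=|\langle Uf,Ue_\lambda\rangle_{L^2(\mu)}|=|\langle f,e_\lambda\rangle_{L^2(\widetilde\mu)}| .
\]
Summing over $\lambda\in\Lambda$ and using $\|g\|_{L^2(\mu)}=\|f\|_{L^2(\widetilde\mu)}$ shows that $a\Lambda$ is a Fourier frame spectrum for $\mu$ with the same bounds $A,C$. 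Running the same argument with $T^{-1}$, which is also affine with nonzero slope, gives the converse direction, with spectrum $a^{-1}\Lambda$.

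For orthonormal bases and Riesz bases, note that a unitary operator maps an orthonormal basis to an orthonormal basis and a Riesz basis to a Riesz basis with the same bounds. Multiplying each vector by a unimodular constant does not affect either property. Therefore $\{e_\lambda\}_{\lambda\in\Lambda}$ is an orthonormal (respectively Riesz) basis of $L^2(\widetilde\mu)$ if and only if $\{e_{a\lambda}\}_{\lambda\in\Lambda}$ is one for $L^2(\mu)$.

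The only step needing care is that $U$ is onto, which is not automatic for a general pushforward. Here it holds because $T$ is invertible and $\mu=(T^{-1})_\#\widetilde\mu$. With that established, the remaining steps are routine.
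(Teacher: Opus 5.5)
Your proposal is correct and follows the paper's proof: the composition operator $Uf=f\circ T$ is unitary, it sends $e_\lambda$ to $e^{2\pi i\lambda c}e_{a\lambda}$ so that $a\Lambda$ inherits the frame bounds, the converse comes from applying the same argument to $T^{-1}$, and orthonormal and Riesz bases are handled the same way. You also justify the surjectivity of $U$ via $\mu=(T^{-1})_\#\widetilde\mu$, which the paper only asserts.
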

\begin{proof}
The pushforward identity $\int_{\R} g(y)\,d\widetilde\mu(y)=\int_{\R} g(Tx)\,d\mu(x)$ holds for every integrable function $g$.  Hence the operator $U:L^2(\widetilde\mu)\to L^2(\mu)$ defined by $Ug=g\circ T$ is unitary.  Moreover,
\[
  Ue_\lambda(x)=e_\lambda(Tx)
  =e^{2\pi i\lambda c}e_{a\lambda}(x).
\]
The phase factor $e^{2\pi i\lambda c}$ does not change the modulus of any Fourier coefficient.  Thus a frame with frequencies $\Lambda$ for $\widetilde\mu$ induces a frame with frequencies $a\Lambda$ for $\mu$, with the same frame bounds.  The converse follows by replacing $T$ with $T^{-1}$.  Since $U$ is unitary and maps each exponential to a unimodular multiple of another exponential, the same argument preserves exponential orthonormal bases and exponential Riesz bases.
\end{proof}

We next introduce the symbolic coding for the Bernoulli convolution.  Let
\[
  \Omega=\{-1,1\}^{\N},
  \qquad
  Q=\left(\frac12\delta_{-1}+\frac12\delta_1\right)^{\otimes\N}.
\]
Every $\xi\in\Omega$ has the form $\xi=(\xi_1,\xi_2,\ldots)$ with $\xi_j\in\{-1,1\}$.  Define the coding map $\pi_{\rho,d}:\Omega\to\R$ by
\begin{equation}\label{eq:coding-map}
  \pi_{\rho,d}(\xi)
  =\sum_{j=1}^{\infty}\frac{d\xi_j\rho^j}{2}.
\end{equation}
We set
\(
  \mu_{\rho,d}=(\pi_{\rho,d})_\#Q,
\)
the pushforward of $Q$ under $\pi_{\rho,d}$.  Equivalently, the change-of-variable formula for pushforward measures gives
\begin{equation}\label{eq:pushforward-integral}
  \int_{\R} F(x)\,d\mu_{\rho,d}(x)
  =\int_\Omega F(\pi_{\rho,d}(\xi))\,dQ(\xi)
\end{equation}
for every bounded Borel function $F$ on $\mathbb{R}$.

\vspace{0.2cm}

The coding map \( \pi_{\rho,d}\) given in \eqref{eq:coding-map} is injective whenever $0<\rho<1/2$.
\begin{lemma}\label{lem:coding-injective}
Let $0<\rho<1/2$ and $d>0$.  For $\xi,\xi'\in\Omega$ with $\xi\ne\xi'$, we have $\pi_{\rho,d}(\xi)\ne\pi_{\rho,d}(\xi')$.  Consequently, every finite-coordinate function on $\Omega$ induces an element of $L^2(\mu_{\rho,d})$.
\end{lemma}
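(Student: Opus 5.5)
The plan is to compare the first coordinate at which two distinct codings differ, and to show that the tail can never compensate for that first discrepancy. Let $\xi\ne\xi'$ and let $k$ be the smallest index with $\xi_k\ne\xi'_k$, so $|\xi_k-\xi'_k|=2$. Since the first $k-1$ coordinates agree, those terms cancel, and
\[
  \pi_{\rho,d}(\xi)-\pi_{\rho,d}(\xi')
  =\frac d2\Bigl((\xi_k-\xi'_k)\rho^k+\sum_{j>k}(\xi_j-\xi'_j)\rho^j\Bigr).
\]

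Next I bound the tail. Each difference satisfies $|\xi_j-\xi'_j|\le2$, so the tail sum is at most $2\sum_{j>k}\rho^j=2\rho^{k+1}/(1-\rho)$ in absolute value. The reverse triangle inequality then gives
\[
  |\pi_{\rho,d}(\xi)-\pi_{\rho,d}(\xi')|
  \ge d\rho^k\Bigl(1-\frac{\rho}{1-\rho}\Bigr)
  =d\rho^k\,\frac{1-2\rho}{1-\rho}.
\]
Because $0<\rho<1/2$ and $d>0$, this lower bound is strictly positive, which proves injectivity. This is the only place where the hypothesis $\rho<1/2$ enters, and it is sharp: at $\rho=1/2$ the factor $1-2\rho$ vanishes.

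For the consequence, a finite-coordinate function has the form $g(\xi)=G(\xi_1,\dots,\xi_N)$. I would define $f$ on $K=\pi_{\rho,d}(\Omega)$ by $f(\pi_{\rho,d}(\xi))=g(\xi)$, which is well defined by injectivity. To see that $f$ is Borel, I would use the estimate above with $k\le N$. Points whose codings differ within the first $N$ coordinates are separated by at least $d\rho^N(1-2\rho)/(1-\rho)$. Hence the $2^N$ level-$N$ cylinder images $\pi_{\rho,d}([\epsilon_1\cdots\epsilon_N])$ are pairwise disjoint compact sets, being continuous images of compact cylinders. It follows that $f$ is a finite linear combination of indicators of disjoint compact sets, hence a bounded Borel function. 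By \eqref{eq:pushforward-integral} we get $\int|f|^2\,d\mu_{\rho,d}=\int|g|^2\,dQ<\infty$, so $f\in L^2(\mu_{\rho,d})$, and $g\mapsto f$ preserves inner products.

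I expect no real obstacle. The only care needed is the measurability point: injectivity alone gives only a function on $K$, and it is the uniform separation of cylinder images that makes $f$ Borel.
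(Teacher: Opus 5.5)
Your proposal is correct and follows the paper's proof: the same first-differing-index estimate gives the separation $d\rho^k(1-2\rho)/(1-\rho)>0$, and the $L^2$ norms are then transferred through the pushforward identity. For measurability, the paper notes that $\pi_{\rho,d}$ is a homeomorphism of the compact space $\Omega$ onto its image, whereas you write $f$ as a combination of indicators of the compact cylinder images; these images are already disjoint by injectivity alone, so compactness plus injectivity makes $f$ Borel and the uniform separation is not needed there.
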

\begin{proof}
Let $r=\min\{j\ge1:\xi_j\ne\xi'_j\}$.  Then $|\xi_r-\xi'_r|=2$ and the $r$th term of $\pi_{\rho,d}(\xi)-\pi_{\rho,d}(\xi')$ has magnitude  $d\rho^r$, while the infinite tail starting at index $r+1$ can produce a maximum cancellation of $d\rho^{r+1}/(1-\rho)$.  We therefore obtain the separation estimate
\[
\begin{aligned}
  |\pi_{\rho,d}(\xi)-\pi_{\rho,d}(\xi')|
  \ge d\rho^r-\frac{d\rho^{r+1}}{1-\rho}
  =\frac{d\rho^r(1-2\rho)}{1-\rho}>0.
\end{aligned}
\]
This proves $\pi_{\rho,d}$ is injective.  Now let $h$ be a finite-coordinate function on $\Omega$, and define $H$ on $\pi_{\rho,d}(\Omega)$ by $H(\pi_{\rho,d}(\xi))=h(\xi)$.  Injectivity guarantees that $H$ is well defined.  Since $\Omega$ is compact and $\pi_{\rho,d}$ is continuous and injective, $\pi_{\rho,d}$ is a homeomorphism onto its image; hence $H$ is Borel measurable.  The pushforward identity \eqref{eq:pushforward-integral} gives
\begin{equation}\label{eq:l2-transfer}
  \int_{\R}|H(x)|^2\,d\mu_{\rho,d}(x)
  =\int_\Omega |h(\xi)|^2\,dQ(\xi).
\end{equation}
Thus $H\in L^2(\mu_{\rho,d})$ whenever $h\in L^2(Q)$.
\end{proof}

For a finite index set $F\subset\N$, define the Walsh packet indexed by $F$ by
\begin{equation}\label{eq:walsh-packet}
  w_F(\xi)=\prod_{j\in F}\xi_j,
  \qquad w_\varnothing=1.
\end{equation}
Each coordinate $\xi_j$ is a $\{-1,1\}$-valued Rademacher random variable.  If one works instead with binary digits $\varepsilon_j\in\{0,1\}$, then $\xi_j=2\varepsilon_j-1=-(-1)^{\varepsilon_j}$, so the two Walsh conventions differ only by the fixed factor $(-1)^{|F|}$.  Since $|w_F|=1$, one has $\|w_F\|_{L^2(Q)}=1$.

\vspace{0.2cm}

When $0<\rho<1/2$, Lemma~\ref{lem:coding-injective} permits us to transfer each Walsh packet \eqref{eq:walsh-packet} to the corresponding function $W_F\in L^2(\mu_{\rho,d})$ defined by
\begin{equation}\label{eq:walsh-transfer}
  W_F(\pi_{\rho,d}(\xi))=w_F(\xi).
\end{equation}
Its $L^2$-norm is preserved under this transform by \eqref{eq:l2-transfer}, so $\|W_F\|_{L^2(\mu_{\rho,d})}=1$.  This step relies essentially on the non-overlapping condition.  In the overlapping range, the coding map has multiple representations, and a finite-coordinate Walsh packet need not induce a well-defined function of the physical variable.

\vspace{0.2cm}

We finally derive the Fourier transform of $\mu_{\rho,d}$ from the pushforward identity.  For $\lambda\in\R$, independence of the coordinates gives
\begin{equation}\label{eq:fourier-product}
\begin{aligned}
  \widehat{\mu_{\rho,d}}(\lambda)
  =\int_\Omega e^{-2\pi i\lambda\pi_{\rho,d}(\xi)}\,dQ(\xi)
  =:\prod_{j=1}^{\infty}m_j(\lambda),
\end{aligned}
\end{equation}
where $m_j(\lambda)=\cos(\pi d\lambda\rho^j)$.
The infinite product converges, since
\[
  \sum_{j=1}^{\infty}|1-m_j(\lambda)|
  \le
  \frac{(\pi d|\lambda|)^2}{2}
  \sum_{j=1}^{\infty}\rho^{2j}<\infty.
\]
For later use, set $n_j(\lambda):=-i\sin(\pi d\lambda\rho^j)$, and define the zero set of the $j$-th cosine mask by
\begin{equation}\label{eq:pole-sets}
  P_j
  :=\{\lambda\in\R:m_j(\lambda)=0\}
  =\{\lambda\in\R:\cos(\pi d\lambda\rho^j)=0\}.
\end{equation}
Denote $P:=\bigcup_{j\ge1}P_j$ as a union of these zero sets.  The set $P_j$ is also the pole set of the tangent function at level $j$.  We therefore define
\begin{equation}\label{eq:tangent-sequence}
  u_j(\lambda):=\tan(\pi d\lambda\rho^j),
  \qquad \lambda\in\R\setminus P_j.
\end{equation}
The quotient of the sine and cosine masks is therefore
\[
\frac{n_j(\lambda)}{m_j(\lambda)}
=-iu_j(\lambda),
\qquad \lambda\notin P_j.
\]
Its precise relation to the Walsh coefficients will be established
in Lemma~\ref{lem:walsh-product}.
Thus the tangent sequence arises naturally as the quotient of the sine and cosine masks.  The exact Walsh coefficient formula and the treatment of the zero factors are given in Section~\ref{sec:walsh-frame}.

\section{Abstract decay and tangent dynamics}\label{sec:obstruction}

In this section, we collect self-contained measure-theoretic and analytic tools for the non-Fourier-frame contradiction established in Section~\ref{sec:walsh-frame}.  We establish the core $L^2(\eta)$-vanishing result for the tangent-valued sequence $(u_s)_{s\ge1}$ associated with the Bernoulli convolution masks.  Theorem~\ref{thm:abstract-lag} below formulates this abstract decay principle in full generality, with no reliance on the Bernoulli mask structure or the scaling relation $\rho^{-m}=B$. Theorem~\ref{thm:abstract-lag} plays a key role in our nonexistence argument. With this foundational result established, we focus on verifying the feasibility of conditions~\ref{ass:pointwise}--\ref{ass:product}. Proposition~\ref{prop:level-shift-dynamics} establishes the propagation implication in condition~\ref{ass:propagation} outside the pole set. In Section~\ref{sec:walsh-frame}, we show that the pole set is $\eta$-null, thereby establishing condition~\ref{ass:pointwise}, and derive condition~\ref{ass:product} from the upper frame inequality. The lower frame inequality then provides the positive lower bound used in the final contradiction.

\begin{theorem}\label{thm:abstract-lag}
Let $\eta$ be a finite positive Borel measure on $\R$, let $m\ge1$ be an integer, and let $(v_s)_{s\ge1}$ be measurable functions that are finite $\eta$-almost everywhere.  Then the decay conclusion
\[
  \lim_{s\to\infty}
  \int_{\R}|v_s(\lambda)|^2\,d\eta(\lambda)=0
\]
holds provided that the following three assumptions are satisfied.
\begin{enumerate}[label=(A\arabic*)]
\item\label{ass:pointwise} For $\eta$-almost every $\lambda\in\R$, one has $v_s(\lambda)\to0$ as $s\to\infty$.
\item\label{ass:propagation} There exist constants $R_0>0$ and $c>0$ such that, for every $s>m$ and for $\eta$-almost every $\lambda\in\R$,
\[
  |v_s(\lambda)|\ge R_0
  \quad\Longrightarrow\quad
  |v_{s-m}(\lambda)|\ge c|v_s(\lambda)|.
\]
\item\label{ass:product} There exists a constant $C<\infty$ such that, for every $s>m$,
\[
  \int_{\R}|v_{s-m}(\lambda)v_s(\lambda)|^2\,d\eta(\lambda)\le C.
\]
\end{enumerate}
\end{theorem}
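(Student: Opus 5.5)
The plan is a truncation argument: split $\int|v_s|^2\,d\eta$ at a height $R\ge R_0$. The part where $|v_s|<R$ will be handled by dominated convergence using \ref{ass:pointwise}. The part where $|v_s|\ge R$ will be handled by combining the propagation implication \ref{ass:propagation} with the product bound \ref{ass:product}. Since $R$ is arbitrary, letting $R\to\infty$ gives the result.

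First I will remove null sets. For each $s>m$, \ref{ass:propagation} holds outside an $\eta$-null set $N_s$. Also \ref{ass:pointwise} holds outside a null set $N_0$, and each $v_s$ is finite outside a null set $N'_s$. The union $N$ of these countably many sets is $\eta$-null, so I will work on $\R\setminus N$ throughout, where all hypotheses hold simultaneously for every $s$.

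Now fix $R\ge R_0$ and $s>m$, and write
\[
\int_{\R}|v_s|^2\,d\eta=\int_{\{|v_s|<R\}}|v_s|^2\,d\eta+\int_{\{|v_s|\ge R\}}|v_s|^2\,d\eta=:I_s(R)+J_s(R).
\]
For the first term, the integrand $|v_s|^2\mathbf 1_{\{|v_s|<R\}}$ is bounded by $R^2$, which is $\eta$-integrable because $\eta$ is finite. It tends to $0$ $\eta$-a.e. by \ref{ass:pointwise}, so dominated convergence gives $I_s(R)\to0$ as $s\to\infty$.

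For the second term, on $\{|v_s|\ge R\}\subset\{|v_s|\ge R_0\}$ assumption \ref{ass:propagation} gives $|v_{s-m}|\ge c|v_s|$. Hence
\[
|v_{s-m}v_s|^2\ge c^2|v_s|^4\ge c^2R^2|v_s|^2 ,
\]
and therefore, by \ref{ass:product},
\[
J_s(R)\le \frac{1}{c^2R^2}\int_{\R}|v_{s-m}v_s|^2\,d\eta\le \frac{C}{c^2R^2},
\]
uniformly in $s>m$.

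Combining the two estimates gives $\limsup_{s\to\infty}\int|v_s|^2\,d\eta\le C/(c^2R^2)$ for every $R\ge R_0$. Letting $R\to\infty$ yields the claimed decay.

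There is no serious obstacle here. The only points needing care are the bookkeeping of the countably many exceptional null sets, and the observation that the propagation inequality upgrades the product bound into a quadratic gain $R^{-2}$ on the large-value region. That gain is exactly what makes the tail $J_s(R)$ uniformly small.
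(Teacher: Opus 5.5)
Your proof is correct and follows the same route as the paper. You discard a countable union of null sets, truncate at a height $R\ge R_0$, use dominated convergence below $R$, and combine (A2) with (A3) above $R$.

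The only difference is the tail step. You bound it in one line via $|v_{s-m}v_s|^2\ge c^2R^2|v_s|^2$ on $\{|v_s|\ge R\}$, getting $Cc^{-2}R^{-2}$. The paper instead first proves $\eta(\{|v_s|\ge R\})\le Cc^{-2}R^{-4}$ and then integrates this through the layer-cake formula to reach $2Cc^{-2}R^{-2}$. Your shortcut is valid and slightly more economical.
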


\begin{proof}
Let $N_1$ denote an $\eta$-null set outside which condition~\ref{ass:pointwise} holds, and for each integer $s>m$, let $N_{2,s}$ be an $\eta$-null set outside which condition~\ref{ass:propagation} holds.  Set
\[
  N:=N_1\cup\bigcup_{s=m+1}^{\infty}N_{2,s}.
\]
Then $\eta(N)=0$, and so all subsequent pointwise arguments may be restricted to $\R\setminus N$.
Fix $R\ge R_0$ and $s>m$, and define the level set
\[
  E_{s,R}:=\{\lambda\in\R:|v_s(\lambda)|\ge R\}.
\]
For each $\lambda\in E_{s,R}\setminus N$, condition~\ref{ass:propagation} gives
\[
  |v_{s-m}(\lambda)v_s(\lambda)|^2
  \ge c^2|v_s(\lambda)|^4
  \ge c^2R^4.
\]
This estimate and condition~\ref{ass:product} imply
\[
  C
  \ge
  \int_{E_{s,R}}|v_{s-m}(\lambda)v_s(\lambda)|^2\,d\eta(\lambda)
  \ge
  c^2R^4\eta(E_{s,R}).
\]
Rearranging gives the uniform tail estimate
\begin{equation}\label{eq:tail-measure}
  \eta(E_{s,R})
  =\eta\bigl(\{\lambda\in\R:|v_s(\lambda)|\ge R\}\bigr)
  \le Cc^{-2}R^{-4}.
\end{equation}
We evaluate the truncated $L^2$-integral using the layer-cake formula (Cavalieri's principle).  For any $x\ge0$ and $R\ge0$,
\[
  x^2\mathbf 1_{\{x>R\}}
  =R^2\mathbf 1_{\{x>R\}}
   +\int_R^\infty 2t\,\mathbf 1_{\{x>t\}}\,dt.
\]
Applying this identity with $x=|v_s(\lambda)|$, Tonelli's theorem and \eqref{eq:tail-measure} give, for $R\ge R_0$,
\begin{equation}\label{eq:tail-l2}
\begin{aligned}
  \int_{\{\lambda\in\R:|v_s(\lambda)|>R\}}
    |v_s(\lambda)|^2\,d\eta(\lambda)
  &=R^2\eta\bigl(\{|v_s|>R\}\bigr)
    +\int_R^{\infty}2t\,\eta\bigl(\{|v_s|>t\}\bigr)\,dt \\
  &\le Cc^{-2}R^{-2}
    +2Cc^{-2}\int_R^{\infty}t^{-3}\,dt\\
  &=2Cc^{-2}R^{-2}.
\end{aligned}
\end{equation}
This estimate is uniform for $s>m$.

Now fix $R\ge R_0$.  Since $|v_s|^2\mathbf 1_{\{|v_s|\le R\}}\le R^2$ and $\eta$ is finite, the dominated convergence theorem and condition~\ref{ass:pointwise} give
\[
  \int_{\{\lambda\in\R:|v_s(\lambda)|\le R\}}
    |v_s(\lambda)|^2\,d\eta(\lambda)
  \longrightarrow0
  \qquad\text{as }s\to\infty.
\]
Combining this with \eqref{eq:tail-l2} gives
\[
  \limsup_{s\to\infty}
  \int_{\R}|v_s(\lambda)|^2\,d\eta(\lambda)
  \le
  \sup_{s>m}
  \int_{\{\lambda\in\R:|v_s(\lambda)|>R\}}
    |v_s(\lambda)|^2\,d\eta(\lambda)
  \le 2Cc^{-2}R^{-2}.
\]
Letting $R\to\infty$ completes the proof.
\end{proof}

We next analyze the propagation behavior of the tangent quotients arising from the Bernoulli masks.  The elementary fact is that the poles of $\tan\theta$ occur at $\theta=\pi/2\pmod\pi$.  Multiplication by an odd integer preserves this pole class, whereas multiplication by an even integer sends it to the zero class.  The lemma below resolves a minor branch-cut ambiguity introduced by the principal branch of the arctangent function when tracking the poles and finite values of $\tan(B\arctan u)$.

\begin{lemma}\label{lem:tangent-periodicity}
Let $B\ge1$ be an integer and define $T_B(u):=\tan(B\arctan u)$ whenever $\cos(B\arctan u)\ne0$, where $\arctan u\in(-\pi/2,\pi/2)$ denotes the principal branch of the arctangent function.  Fix $\theta\in\R$ with $\cos\theta\ne0$, and set $u=\tan\theta$.  Then
\[
  \cos(B\arctan u)=0
  \quad\Longleftrightarrow\quad
  \cos(B\theta)=0.
\]
In particular, if $\cos(B\theta)\ne0$, then both tangent expressions are finite and $T_B(\tan\theta)=\tan(B\theta)$.
\end{lemma}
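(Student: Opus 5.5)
The proof reduces the branch issue to a congruence modulo $\pi$. Put $\alpha:=\arctan u\in(-\pi/2,\pi/2)$. Since $\cos\theta\ne0$ and $\tan\alpha=u=\tan\theta$, and $\tan$ is $\pi$-periodic and injective on each interval $(k\pi-\pi/2,k\pi+\pi/2)$, there is an integer $k$ with
\[
  \theta=\alpha+k\pi .
\]
This is the only place the principal branch enters. The point is that $\theta$ and the principal value differ by an integer multiple of $\pi$, even though $\theta$ itself need not lie in $(-\pi/2,\pi/2)$.

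Multiplying by the integer $B$ gives $B\theta=B\alpha+Bk\pi$, with $Bk\in\Z$. Hence
\[
  \cos(B\theta)=\cos(B\alpha+Bk\pi)=(-1)^{Bk}\cos(B\alpha).
\]
The factor $(-1)^{Bk}$ is nonzero, so $\cos(B\theta)=0$ if and only if $\cos(B\alpha)=0$, which is exactly $\cos(B\arctan u)=0$. This proves the equivalence.

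For the second assertion, assume $\cos(B\theta)\ne0$. The equivalence then gives $\cos(B\arctan u)\ne0$, so both $\tan(B\theta)$ and $T_B(u)=\tan(B\alpha)$ are finite. The sine satisfies the same relation, $\sin(B\theta)=(-1)^{Bk}\sin(B\alpha)$, and the sign factor cancels in the quotient. Therefore
\[
  T_B(\tan\theta)=\tan(B\alpha)=\tan(B\alpha+Bk\pi)=\tan(B\theta).
\]

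None of these steps is difficult. The only step needing care is the first: justifying that $\theta-\arctan(\tan\theta)\in\pi\Z$ for every $\theta$ with $\cos\theta\ne0$, not just for $\theta$ in the principal interval. Parity of $B$ is not used anywhere, which matches the statement's hypothesis that $B\ge1$ is an arbitrary integer.
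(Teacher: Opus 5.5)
Your proof is correct and follows the paper's argument essentially verbatim: you write $\theta=\arctan(\tan\theta)+k\pi$, multiply by $B$, use $\cos(B\theta)=(-1)^{Bk}\cos(B\arctan u)$ for the equivalence, and use $\pi$-periodicity of the tangent for $T_B(\tan\theta)=\tan(B\theta)$. Your justification that such an integer $k$ exists, via injectivity of $\tan$ on $(-\pi/2,\pi/2)$, is a detail the paper leaves implicit.
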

\begin{proof}
Since $\cos\theta\ne0$, there exists an integer $k$ such that $\arctan(\tan\theta)=\theta-k\pi$.  Multiplying both sides by $B$, we obtain
\[
  B\arctan(\tan\theta)=B\theta-Bk\pi.
\]
As $Bk\in\Z$,
\[
  \cos(B\arctan(\tan\theta))
  =\cos(B\theta-Bk\pi)
  =(-1)^{Bk}\cos(B\theta).
\]
Hence the first statement follows.  If $\cos(B\theta)\ne0$, the $\pi$-periodicity of the tangent function gives the second statement.
\end{proof}

The proposition below captures the parity dichotomy central to the proof.  For odd $B$, points close to a pole of $\tan\theta$ remain close to a pole after applying $T_B$; accordingly, large values of $|u|$ produce large values of $|T_B(u)|$.  In contrast, even $B$ maps the pole class to the zero class, which forces the product $uT_B(u)$ to remain bounded.  This distinction explains why the obstruction obtained here arises only for odd integer multipliers.

\begin{proposition}\label{prop:odd-even-dynamics}
Let $B\ge2$ be an integer.
\begin{enumerate}[label=(\roman*)]
\item If $B$ is odd, then there exist constants $R_B>0$ and $c_B>0$ such that $|u|\ge R_B$ implies $|T_B(u)|\ge c_B|u|$.
\item If $B$ is even, then $uT_B(u)\to-B$ as $|u|\to\infty$.
\end{enumerate}
\end{proposition}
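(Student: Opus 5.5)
The plan is to work near the pole by a reciprocal substitution. For large $|u|$, write $u=\tan\theta$ with $\theta=\arctan u\in(-\pi/2,\pi/2)$. Set $\varphi:=\pi/2-\theta$ when $u>0$ and $\varphi:=-\pi/2-\theta$ when $u<0$, so that $|\varphi|<\pi/2$ and $\tan\varphi=1/u$. This gives $\varphi=\arctan(1/u)$ and hence $|\varphi|\le 1/|u|$. By Lemma~\ref{lem:tangent-periodicity}, whenever $\cos(B\theta)\ne0$ we have $T_B(u)=\tan(B\theta)$. The whole task therefore reduces to comparing $\tan(B\theta)$ with $\tan\theta$ near $\theta=\pm\pi/2$.

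\emph{(i) $B$ odd.} We have $B\theta=\pm B\pi/2\mp B\varphi$. Because $B$ is odd, $B\pi/2\equiv\pi/2\pmod\pi$, so $\pi$-periodicity and the identity $\tan(\pi/2-x)=\cot x$ give
\[
T_B(u)=\tan(B\theta)=\cot(B\varphi),
\]
valid whenever $B\varphi\notin\pi\Z$. For $|u|>B$ we get $0<|B\varphi|\le B/|u|<1<\pi$, so $\cos(B\theta)\ne0$ and $T_B$ is defined.
Using $|\cot x|\ge \cos 1/|x|$ for $0<|x|\le1$ (from $|\sin x|\le|x|$ and $\cos x\ge\cos1$), we obtain
\[
|T_B(u)|\ge \frac{\cos 1}{B|\varphi|}\ge \frac{\cos 1}{B}\,|u|.
\]
So one may take $R_B=B+1$ and $c_B=\cos(1)/B$.

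\emph{(ii) $B$ even.} Now $B\pi/2\in\pi\Z$, so $T_B(u)=\tan(\mp B\varphi)=-\tan(B\varphi)$ up to the sign convention. More carefully, $\tan(B\theta)=\tan(\pm B\pi/2\mp B\varphi)=\tan(\mp B\varphi)$, and since $u$ and $\varphi$ have the same sign, this equals $-\tan(B\varphi)$ in both cases. It is defined once $|B\varphi|<\pi/2$, which holds for $|u|$ large. Then
\[
uT_B(u)=-\frac{\tan(B\varphi)}{\tan\varphi}\longrightarrow -B
\]
as $\varphi\to0$, that is, as $|u|\to\infty$, since $\tan x\sim x$.

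The only delicate step is the sign and branch bookkeeping relating $\varphi$ and $\theta$ in the two cases $u>0$ and $u<0$. It must also be checked that $T_B$ is genuinely defined on the stated range, which is exactly what Lemma~\ref{lem:tangent-periodicity} guarantees.
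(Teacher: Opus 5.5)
Your proof is correct and follows essentially the paper's route. Both arguments substitute $\arctan u=\pi/2-\alpha$ near the pole and obtain $T_B(u)=\cot(B\alpha)$ for odd $B$ and $T_B(u)=-\tan(B\alpha)$ for even $B$; you handle $u<0$ directly instead of using the oddness of $T_B$, and you give explicit constants $R_B=B+1$, $c_B=\cos(1)/B$ where the paper relies on $T_B(u)/u\to1/B$.

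One small sign slip needs fixing. When $u<0$ you have $\theta=-\pi/2-\varphi$, so the correct expansion is $B\theta=\pm B\pi/2-B\varphi$, not $\mp B\varphi$. With that correction, $\tan(B\theta)=-\tan(B\varphi)$ follows immediately in the even case, so the ``same sign'' justification should be dropped; your final formulas are right.
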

\begin{proof}
The function $T_B$ is odd on its natural domain, so it is enough to consider $u\to+\infty$.  We first fix the notation:
\begin{itemize}
\item $\alpha\downarrow0$ means that $\alpha>0$ and $\alpha\to0$;
\item $f\sim g$ means that $f/g\to1$ in the indicated limit.
\end{itemize}
Set $\arctan u=\pi/2-\alpha$, with $\alpha\downarrow0$.  Then $u=\cot\alpha\sim1/\alpha$.

If $B$ is odd, write $B=2\ell+1$ for some integer $\ell\ge1$.  Then
\[
  B\arctan u
  =B\left(\frac\pi2-\alpha\right)
  =\ell\pi+\frac\pi2-B\alpha,
\]
and
\[
  T_B(u)
  =\tan\left(\ell\pi+\frac\pi2-B\alpha\right)
  =\cot(B\alpha)
  \sim\frac1{B\alpha}.
\]
Recall that $u=\cot\alpha\sim1/\alpha$.  Consequently, $T_B(u)/u\to1/B$.  Thus the desired inequality in part~(i) holds for all sufficiently large $|u|$.

If $B$ is even, write $B=2\ell$.  Then
\[
  B\arctan u=\ell\pi-B\alpha,
\]
and
\[
  T_B(u)
  =\tan(\ell\pi-B\alpha)
  =-\tan(B\alpha)
  \sim-B\alpha.
\]
Since $u\sim1/\alpha$, we get $uT_B(u)\to-B$ as $|u|\to\infty$.
\end{proof}

We now transfer the properties of the single-variable tangent map $T_B$ to the sequence $(u_s)$ defined through the angular scaling $\theta_s(\lambda)=\pi d\lambda\rho^s$.  Recall from \eqref{eq:pole-sets} that $P_j$ is the zero set of the $j$-th cosine mask and hence the pole set of $u_j$.  The algebraic identity $\rho^{-m}=B$ yields the exact identity $\theta_{s-m}=B\theta_s$ for every $\lambda$.  As a result, the large-magnitude propagation property obtained above holds uniformly for all $s>m$ and all $\lambda$ outside $P_s\cup P_{s-m}$.

\begin{proposition}\label{prop:level-shift-dynamics}
Assume $0<\rho<1$, $d>0$, and let integers $m\ge1$ and $B\ge2$ satisfy the algebraic identity $\rho^{-m}=B$.  Then, for $s>m$ and $\lambda\notin P_s\cup P_{s-m}$, the quantities $u_{s-m}(\lambda)$ and $T_B(u_s(\lambda))$ are finite and
\[
  u_{s-m}(\lambda)=T_B(u_s(\lambda)).
\]
When $B$ is odd, there exist constants $R_B>0$ and $c_B>0$, independent of $s$ and $\lambda$, such that, for every $s>m$ and every $\lambda\notin P_s\cup P_{s-m}$,
\[
  |u_s(\lambda)|\ge R_B
  \quad\Longrightarrow\quad
  |u_{s-m}(\lambda)|\ge c_B|u_s(\lambda)|.
\]
\end{proposition}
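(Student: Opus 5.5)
The plan is to reduce the statement to Lemma~\ref{lem:tangent-periodicity} and Proposition~\ref{prop:odd-even-dynamics} through the angular identity. Set $\theta_j(\lambda):=\pi d\lambda\rho^j$, so that $u_j(\lambda)=\tan\theta_j(\lambda)$ and $P_j=\{\lambda:\cos\theta_j(\lambda)=0\}$. Since $\rho^{-m}=B$, we have $\rho^{s-m}=\rho^s\rho^{-m}=B\rho^s$. This gives the exact identity $\theta_{s-m}(\lambda)=B\theta_s(\lambda)$ for every $\lambda\in\R$ and every $s>m$. This is the only place where the algebraic hypothesis enters.

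For the first assertion, fix $s>m$ and $\lambda\notin P_s\cup P_{s-m}$, and write $\theta=\theta_s(\lambda)$. Since $\lambda\notin P_s$, we have $\cos\theta\ne0$, so $u_s(\lambda)=\tan\theta$ is finite. Since $\lambda\notin P_{s-m}$, we have $\cos(B\theta)=\cos\theta_{s-m}(\lambda)\ne0$, so $u_{s-m}(\lambda)=\tan(B\theta)$ is finite. Lemma~\ref{lem:tangent-periodicity}, applied with this $\theta$, then shows that $\cos(B\arctan u_s(\lambda))\ne0$. Hence $T_B(u_s(\lambda))$ is defined, and
\[
T_B(u_s(\lambda))=T_B(\tan\theta)=\tan(B\theta)=u_{s-m}(\lambda).
\]

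For the second assertion, take $B$ odd, and let $R_B,c_B$ be the constants from Proposition~\ref{prop:odd-even-dynamics}(i). These depend only on $B$, so they are independent of $s$ and $\lambda$. If $|u_s(\lambda)|\ge R_B$, that proposition gives $|T_B(u_s(\lambda))|\ge c_B|u_s(\lambda)|$. Substituting the identity from the first part then yields $|u_{s-m}(\lambda)|\ge c_B|u_s(\lambda)|$.

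One point needs care, and it is the only real obstacle: Proposition~\ref{prop:odd-even-dynamics}(i) tacitly requires $T_B(u)$ to be defined. Here this is guaranteed by the excluded set $P_{s-m}$ through Lemma~\ref{lem:tangent-periodicity}. Near a pole, however, $T_B$ has other poles at finite $u$. So I would make sure the proposition's asymptotic statement is applied only on the domain of $T_B$. If needed, I would enlarge $R_B$ so that, on $(R_B,\infty)$, the quantity $B\alpha$ (with $\alpha=\pi/2-\arctan u$) stays in $(0,\pi/2)$, which makes $\cot(B\alpha)$ finite and positive there. Taking for instance $R_B>\cot(\pi/(2B))$ achieves this. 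Everything else is routine.
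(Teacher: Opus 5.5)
Your proposal is correct and follows the paper's proof: derive the exact identity $\theta_{s-m}=B\theta_s$ from $\rho^{-m}=B$, use Lemma~\ref{lem:tangent-periodicity} to get $u_{s-m}=T_B(u_s)$ off $P_s\cup P_{s-m}$, then quote Proposition~\ref{prop:odd-even-dynamics}(i) for odd $B$. Your extra choice $R_B>\cot(\pi/(2B))$, which keeps $T_B$ pole-free on $|u|\ge R_B$, is a harmless refinement of what the paper leaves implicit in ``for all sufficiently large $|u|$''.
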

\begin{proof}
Let $\theta_s(\lambda):=\pi d\lambda\rho^s$.  The identity $\rho^{-m}=B$ implies
\[
  \theta_{s-m}(\lambda)
  =\pi d\lambda\rho^{s-m}
  =B\theta_s(\lambda).
\]
For $\lambda\notin P_s\cup P_{s-m}$, we have $\cos\theta_s(\lambda)\ne0$ and $\cos(B\theta_s(\lambda))=\cos\theta_{s-m}(\lambda)\ne0$.  Lemma~\ref{lem:tangent-periodicity} gives
\[
  u_{s-m}(\lambda)
  =\tan(B\theta_s(\lambda))
  =T_B(\tan\theta_s(\lambda))
  =T_B(u_s(\lambda)).
\]
For odd $B$, the large-magnitude comparison with constants $R_B$ and $c_B$ follows immediately from Proposition~\ref{prop:odd-even-dynamics}.
\end{proof}

With $P:=\bigcup_{j\ge1}P_j$, Proposition~\ref{prop:level-shift-dynamics} verifies condition~\ref{ass:propagation} on $\R\setminus P$.  Moreover, since $\rho^s\to0$, one has $u_s(\lambda)\to0$ as $s\to\infty$ for every $\lambda\in\R\setminus P$.  In Section~\ref{sec:walsh-frame}, we prove that $P$ is null with respect to the frame-induced measure $\eta$, thereby completing the verification of condition~\ref{ass:pointwise}, and derive condition~\ref{ass:product} from the upper frame inequality.

\section{Walsh quotients and the frame contradiction}\label{sec:walsh-frame}

This section turns the mask quotient described in Section~\ref{sec:preliminaries} into exact Fourier coefficient identities. These identities allow us to apply the frame inequalities to two-coordinate and one-coordinate Walsh packets. The former yields condition~\ref{ass:product}, whereas the latter gives a uniform positive lower bound for the $L^2(\eta)$-mass of $u_s$. Together with Theorem~\ref{thm:abstract-lag}, these estimates yield the desired contradiction.

Throughout this section, we assume $0<\rho<1/2$ and $d>0$, so that the functions $W_F$ in \eqref{eq:walsh-transfer} are well defined in $L^2(\mu_{\rho,d})$.  Recall the masks $m_j,n_j$ defined in Section~\ref{sec:preliminaries} and the tangent variables $u_j$ in \eqref{eq:tangent-sequence}.  Their exponential forms are
\begin{equation}\label{eq:mjnj-general}
\begin{aligned}
  m_j(\lambda)
  &=\frac12\left(e^{-\pi i d\lambda\rho^j}
      +e^{\pi i d\lambda\rho^j}\right),\\
  n_j(\lambda)
  &=\frac12\left(e^{-\pi i d\lambda\rho^j}
      -e^{\pi i d\lambda\rho^j}\right).
\end{aligned}
\end{equation}
Whenever $m_j(\lambda)\ne0$, define
\begin{equation}\label{eq:qj-definition}
  q_j(\lambda)
  :=\frac{n_j(\lambda)}{m_j(\lambda)}
  =-iu_j(\lambda).
\end{equation}

We shall use expectation notation associated with the product probability space \((\Omega,Q)\): for $X\in L^1(\Omega,Q)$, define $\E X=\int_\Omega X(\xi)\,dQ(\xi)$.  As $Q$ is a product measure and the coordinate random variables $\{\xi_j\}$ are independent, the expectation of a finite product of single-coordinate functions $G_j(\xi_j)$ satisfies the factorization identity \[\E\prod_jG_j(\xi_j)=\prod_j\E G_j(\xi_j).\]  This elementary factorization is the only probabilistic fact we rely on for the subsequent lemma.

\begin{lemma}\label{lem:walsh-product}
Let $F\subset\N$ be finite.  Then
\begin{equation}\label{eq:walsh-product}
  \langle W_F,e_\lambda\rangle_{L^2(\mu_{\rho,d})}
  =\prod_{j\in F}n_j(\lambda)
   \prod_{j\notin F}m_j(\lambda).
\end{equation}
In particular, if $m_j(\lambda)\ne0$ for every $j\in F$, then
\begin{equation}\label{eq:walsh-quotient-formula}
  \langle W_F,e_\lambda\rangle_{L^2(\mu_{\rho,d})}
  =\widehat{\mu_{\rho,d}}(\lambda)
   \prod_{j\in F}q_j(\lambda).
\end{equation}
\end{lemma}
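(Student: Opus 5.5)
The plan is to transfer the inner product to the symbolic space and then factor it using independence of the coordinates. By definition of $W_F$ in \eqref{eq:walsh-transfer} and the pushforward identity \eqref{eq:pushforward-integral} (applied to the bounded Borel function $W_F\,\overline{e_\lambda}$, which is legitimate since $\pi_{\rho,d}$ is a homeomorphism onto its image by Lemma~\ref{lem:coding-injective}), we get
\[
  \langle W_F,e_\lambda\rangle_{L^2(\mu_{\rho,d})}
  =\int_\Omega w_F(\xi)\,e^{-2\pi i\lambda\pi_{\rho,d}(\xi)}\,dQ(\xi)
  =\E\Bigl[\prod_{j\in F}\xi_j\prod_{j=1}^{\infty}e^{-\pi i d\lambda\rho^j\xi_j}\Bigr].
\]

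The only technical point is that the second product is infinite, while the factorization identity is stated for finite products. I would truncate at a level $N\ge\max F$. For the finite product over $j\le N$, the factorization identity gives
\[
  \prod_{j\in F}\E\bigl[\xi_je^{-\pi i d\lambda\rho^j\xi_j}\bigr]\prod_{j\le N,\,j\notin F}\E\bigl[e^{-\pi i d\lambda\rho^j\xi_j}\bigr].
\]
The factors are evaluated directly from \eqref{eq:mjnj-general}. For $j\notin F$,
\[
  \E\bigl[e^{-\pi i d\lambda\rho^j\xi_j}\bigr]=\tfrac12\bigl(e^{-\pi i d\lambda\rho^j}+e^{\pi i d\lambda\rho^j}\bigr)=m_j(\lambda).
\]
For $j\in F$,
\[
  \E\bigl[\xi_je^{-\pi i d\lambda\rho^j\xi_j}\bigr]=\tfrac12\bigl(e^{-\pi i d\lambda\rho^j}-e^{\pi i d\lambda\rho^j}\bigr)=n_j(\lambda).
\]

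To pass to the limit $N\to\infty$, note that the integrand over $j\le N$ converges pointwise on $\Omega$ to the full integrand, because the exponent series converges absolutely. The integrands are bounded by $1$, so dominated convergence applies. On the product side, $\prod_{j\le N, j\notin F}m_j(\lambda)$ converges by the same convergence estimate used after \eqref{eq:fourier-product}. This yields \eqref{eq:walsh-product}.

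For \eqref{eq:walsh-quotient-formula}, assume $m_j(\lambda)\ne0$ for $j\in F$. Then write $n_j=q_jm_j$ for each $j\in F$. Reassembling $\prod_{j\in F}m_j\prod_{j\notin F}m_j=\prod_{j\ge1}m_j=\widehat{\mu_{\rho,d}}(\lambda)$ via \eqref{eq:fourier-product}, where finitely many factors are pulled out of a convergent product, gives the claim. The main, though mild, obstacle is the truncation and limit step; everything else is a direct computation.
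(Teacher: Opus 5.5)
Your proposal is correct and follows essentially the same route as the paper's proof. Both transfer the inner product to $(\Omega,Q)$, truncate at some $N\ge\max F$, factor by independence to obtain the masks $n_j$ and $m_j$, and pass to $N\to\infty$ via dominated convergence together with convergence of the cosine product. Both then pull out the finitely many factors indexed by $F$ to obtain the quotient formula.
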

\begin{proof}
Set $a_j=\pi d\lambda\rho^j$.  By the definition of the inner product and the symbolic representation of $\mu_{\rho,d}$,
\begin{align*}
  \langle W_F,e_\lambda\rangle_{L^2(\mu_{\rho,d})}
  &=\int_\Omega w_F(\xi)e^{-2\pi i\lambda\pi_{\rho,d}(\xi)}\,dQ(\xi)\\
  &=\E\left[
    \prod_{j\in F}\xi_j
    \prod_{j=1}^{\infty}e^{-ia_j\xi_j}
    \right].
\end{align*}
Choose $N$ sufficiently large such that $F\subset\{1,\ldots,N\}$, and define the truncated expectation
\[
  I_N(\lambda)
  :=\E\left[
    \prod_{j\in F}\xi_j
    \prod_{j=1}^{N}e^{-ia_j\xi_j}
    \right].
\]
By independence of the coordinates, the finite product factors as
\begin{equation}\label{eq:split-truncated-ex}
  I_N(\lambda)
  =\prod_{j\in F}\E\left[\xi_j e^{-ia_j\xi_j}\right]
   \prod_{\substack{1\le j\le N\\ j\notin F}}
    \E\left[e^{-ia_j\xi_j}\right].
\end{equation}
Since each $\xi_j$ takes the values $1$ and $-1$ with probability $1/2$, the exponential forms in \eqref{eq:mjnj-general} give
\[
  \E\left[e^{-ia_j\xi_j}\right]
  =\frac12(e^{-ia_j}+e^{ia_j})=m_j(\lambda),
\]
and
\[
  \E\left[\xi_j e^{-ia_j\xi_j}\right]
  =\frac12(e^{-ia_j}-e^{ia_j})=n_j(\lambda).
\]
Substituting these identities into \eqref{eq:split-truncated-ex} gives
\[
  I_N(\lambda)
  =\prod_{j\in F}n_j(\lambda)
   \prod_{\substack{1\le j\le N\\ j\notin F}}m_j(\lambda).
\]
The truncated exponential products converge pointwise to the full infinite product, and their moduli are bounded by $1$.  The dominated convergence theorem therefore gives
\[
  I_N(\lambda)
  \longrightarrow
  \langle W_F,e_\lambda\rangle_{L^2(\mu_{\rho,d})}
  \qquad\text{as }N\to\infty.
\]
On the other hand, the infinite cosine product converges by the standard infinite-product criterion, since
\[
  \sum_{j=1}^{\infty}|1-m_j(\lambda)|
  \le\frac12\sum_{j=1}^{\infty}(\pi d|\lambda|\rho^j)^2<\infty.
\]
Hence
\[
  \prod_{j\in F}n_j(\lambda)
   \prod_{\substack{1\le j\le N\\ j\notin F}}m_j(\lambda)
  \longrightarrow
  \prod_{j\in F}n_j(\lambda)
   \prod_{j\notin F}m_j(\lambda).
\]
This proves \eqref{eq:walsh-product}.  If $m_j(\lambda)\ne0$ for every $j\in F$, then
\begin{align*}
  \prod_{j\in F}n_j(\lambda)
   \prod_{j\notin F}m_j(\lambda)
  &=\left(\prod_{j=1}^{\infty}m_j(\lambda)\right)
    \prod_{j\in F}\frac{n_j(\lambda)}{m_j(\lambda)}\\
  &=\widehat{\mu_{\rho,d}}(\lambda)
    \prod_{j\in F}q_j(\lambda),
\end{align*}
which gives \eqref{eq:walsh-quotient-formula} by \eqref{eq:qj-definition}.
\end{proof}

We now address the zero factors suppressed by the quotient notation. The quotient formula from Lemma~\ref{lem:walsh-product} is immediate at frequencies where $\widehat{\mu_{\rho,d}}(\lambda)\ne0$. The central subtlety concerns the zero set of   $\widehat{\mu_{\rho,d}}$.

\begin{lemma}\label{lem:zero-completed-single}
Suppose $\rho^{-m}=B$, where $m\ge1$ and $B\ge3$ is odd.  Then, for $s>m$ and every $\lambda\in\R$,
\[
  \langle W_{\{s\}},e_\lambda\rangle_{L^2(\mu_{\rho,d})}
  =
  \begin{cases}
    \widehat{\mu_{\rho,d}}(\lambda)q_s(\lambda),
    & \widehat{\mu_{\rho,d}}(\lambda)\ne0,\\[3pt]
    0,
    & \widehat{\mu_{\rho,d}}(\lambda)=0.
  \end{cases}
\]
\end{lemma}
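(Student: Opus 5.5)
The plan is to work directly from the exact product formula \eqref{eq:walsh-product} of Lemma~\ref{lem:walsh-product} with $F=\{s\}$:
\[
  \langle W_{\{s\}},e_\lambda\rangle_{L^2(\mu_{\rho,d})}
  =n_s(\lambda)\prod_{j\ne s}m_j(\lambda).
\]
The proof then splits according to whether $\widehat{\mu_{\rho,d}}(\lambda)$ vanishes.

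\emph{Case $\widehat{\mu_{\rho,d}}(\lambda)\ne0$.} Every factor $m_j(\lambda)$ of the product \eqref{eq:fourier-product} must be nonzero; in particular $m_s(\lambda)\ne0$. Then \eqref{eq:walsh-quotient-formula} gives $\widehat{\mu_{\rho,d}}(\lambda)q_s(\lambda)$ at once.

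\emph{Case $\widehat{\mu_{\rho,d}}(\lambda)=0$.} First I show that some single factor $m_k(\lambda)$ vanishes. The product converges absolutely because $\sum_j|1-m_j(\lambda)|<\infty$. Moreover $m_j(\lambda)\to1$, so only finitely many factors can be small. Hence the tail $\prod_{j>J}m_j(\lambda)$ is nonzero for $J$ large, and a vanishing product forces $m_k(\lambda)=0$ for some $k\le J$. This is the standard fact that an absolutely convergent infinite product vanishes only if one of its factors does.
\begin{itemize}
\item If $k\ne s$, then $m_k(\lambda)$ appears in $\prod_{j\ne s}m_j(\lambda)$, so the Walsh coefficient is $0$.
\item If $k=s$, so $m_s(\lambda)=0$, I use the scale relation exactly as in the proof of Proposition~\ref{prop:level-shift-dynamics}. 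Put $\theta_s=\pi d\lambda\rho^s$. Then $\theta_{s-m}=B\theta_s$. Since $\cos\theta_s=0$, we have $\theta_s\in\pi/2+\pi\Z$. Because $B$ is odd, $B\theta_s\in\pi/2+\pi\Z$ as well, so $m_{s-m}(\lambda)=\cos\theta_{s-m}=0$. Since $s>m$, the index $s-m\ge1$ is a genuine index different from $s$. So $m_{s-m}(\lambda)$ is a zero factor in $\prod_{j\ne s}m_j(\lambda)$, and the coefficient vanishes again.
\end{itemize}

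The main point needing care is the subcase where the only zero mask sits at level $s$ itself. There the factor $n_s$ replaces $m_s$ and might seem to make the coefficient nonzero. The parity argument, that an odd $B$ preserves the pole class, removes this by producing a second zero factor at level $s-m$. This is exactly where oddness of $B$ and the hypothesis $s>m$ enter. The other ingredient, that a vanishing convergent cosine product must have a vanishing factor, is routine.
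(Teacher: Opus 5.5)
Your proposal is correct and follows the paper's proof essentially step for step. In the nonvanishing case you read off $m_s(\lambda)\ne0$ and apply the quotient formula. In the vanishing case you locate a zero factor $m_r(\lambda)$ using convergence of the cosine product, and when $r=s$ you use oddness of $B$ and $\theta_{s-m}=B\theta_s$ to produce the second zero factor $m_{s-m}(\lambda)$.
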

\begin{proof}
	If \(\widehat{\mu_{\rho,d}}(\lambda)\ne0\),  its infinite product representation \(\widehat{\mu_{\rho,d}}(\lambda)=\prod_{j=1}^{\infty}m_j(\lambda)\) implies that \(m_j(\lambda)\ne 0\) for all \(j\ge 1\). Applying  Lemma~\ref{lem:walsh-product} to the singleton index set \(F=\{s\}\)  gives that
\[\langle W_{\{s\}},e_\lambda\rangle=\widehat{\mu_{\rho,d}}(\lambda)q_s(\lambda).\]			
If \(\widehat{\mu_{\rho,d}}(\lambda)=0\),  by the product formula \eqref{eq:fourier-product}, this means that \(m_r(\lambda)=0\) for some \(r\ge1\).  Otherwise, if \(m_j(\lambda)\ne 0\) for all \(j\ge 1\), then \(\sum_{j=1}^{\infty}|1-m_j(\lambda)|<\infty\) would imply the infinite product \(\widehat{\mu_{\rho,d}}(\lambda)\) converges to a nonzero complex number, since all but finitely many factors \(m_j(\lambda)=\cos(\pi d\lambda\rho^{j})\) lie within a small positive neighborhood of \(1\) and remain strictly positive.			
By Lemma~\ref{lem:walsh-product},
\[
\langle W_{\{s\}},e_\lambda\rangle
=n_s(\lambda)\prod_{j\ne s}m_j(\lambda).
\]
If \(r\ne s\), then the zero factor \(m_r(\lambda)\) remains in the product \(\prod_{j\ne s}m_j(\lambda)\), so \(\langle W_{\{s\}},e_\lambda\rangle=0\).  If \(r=s\), then \(\cos(\pi d\lambda\rho^s)=0\), so \(\pi d\lambda\rho^s=(2k+1)\pi/2\) for some \(k\in\Z\).  Multiplying both sides by \(\rho^{-m}=B\) gives
\[
\pi d\lambda\rho^{s-m}=B\pi d\lambda\rho^s=\frac{B(2k+1)\pi}{2}.
\]
Since \(B\) is odd, \(B(2k+1)\) is odd. Therefore \(m_{s-m}(\lambda)=\cos(\pi d\lambda\rho^{s-m})=0\).  As \(s-m\ne s\), this zero factor still occurs in \(\prod_{j\ne s}m_j(\lambda)\).  Hence \(\langle W_{\{s\}},e_\lambda\rangle=0\) in the zero case as well.
\end{proof}

\begin{remark}
Set $u_s=0$ on $\{\lambda\in\R:\widehat{\mu_{\rho,d}}(\lambda)=0\}$.  Under this convention, Lemma~\ref{lem:zero-completed-single} gives
\[
  |\langle W_{\{s\}},e_\lambda\rangle_{L^2(\mu_{\rho,d})}|^2
  =|\widehat{\mu_{\rho,d}}(\lambda)|^2|u_s(\lambda)|^2
\]
for every $\lambda\in\R$.
\end{remark}

We now assume that a Fourier frame exists on $L^2(\mu_{\rho,d})$.  Let $\Lambda\subset\R$ be a Fourier frame spectrum for $\mu_{\rho,d}$, with lower and upper frame bounds $A$ and $C$, respectively, and let $\eta$ be the frame-induced measure defined in \eqref{eq:eta-def}.  As noted in the Introduction, $\eta$ is finite.

Recall from \eqref{eq:pole-sets} that $P_j=\{\lambda:m_j(\lambda)=0\}$, and set $P=\bigcup_{j=1}^{\infty}P_j$.  If $\lambda\in P_j$, then the product representation of $\widehat{\mu_{\rho,d}}(\lambda)$ contains a zero factor, so $\widehat{\mu_{\rho,d}}(\lambda)=0$.  It follows that $\eta(P_j)=0$ for every $j$, and countable subadditivity gives $\eta(P)=0$.  We use the representative convention from the preceding remark, so $u_j=0$ on $P$ for every $j$; this convention does not change any integral with respect to $\eta$.

For $\lambda\notin P$, all $u_s(\lambda)=\tan(\pi d\lambda\rho^s)$ are finite and $u_s(\lambda)\to0$ as $s\to \infty$ since $\rho^s\to0$.  On the null set $P$, the convergence still holds by our representative convention $u_s=0$ on  $P$.   Condition~\ref{ass:pointwise} of Theorem~\ref{thm:abstract-lag} is therefore satisfied.

The following proposition serves as a key bridge from frame theory to the abstract assumptions of Theorem~\ref{thm:abstract-lag}.  It uses the two frame inequalities for different purposes.  The upper frame bound, applied to a two-coordinate Walsh packet, yields the required product estimate.  The lower frame bound, tested on a single-coordinate Walsh packet, gives a uniform positive lower bound.

\begin{proposition}
Suppose $0<\rho<1/2$, $d>0$, and $\rho^{-m}=B$, where $m\ge1$ is an integer and $B\ge3$ is odd.  Let $\Lambda$ be a Fourier frame spectrum for $\mu_{\rho,d}$ with frame bounds $A$ and $C$, and let $\eta$ be given by \eqref{eq:eta-def}.  Then, for every $s>m$,
\begin{equation}\label{eq:lag-product-bound}
  \int_{\R}|u_{s-m}(\lambda)u_s(\lambda)|^2\,d\eta(\lambda)\le C,
\end{equation}
and
\begin{equation}\label{eq:single-lower-bound}
  \int_{\R}|u_s(\lambda)|^2\,d\eta(\lambda)\ge A.
\end{equation}
\end{proposition}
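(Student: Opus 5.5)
The plan is to test the frame inequality \eqref{eq:frame-def} on the two Walsh functions $W_{\{s-m,s\}}$ and $W_{\{s\}}$. Both have norm $1$ in $L^2(\mu_{\rho,d})$ by \eqref{eq:l2-transfer}. The remaining work is to identify $\sum_{\lambda\in\Lambda}|\langle W_F,e_\lambda\rangle|^2$ with the relevant $\eta$-integral, which needs some care at frequencies where $\widehat{\mu_{\rho,d}}(\lambda)=0$.

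For the lower bound \eqref{eq:single-lower-bound}, apply the lower frame inequality to $W_{\{s\}}$. This gives $A\le\sum_{\lambda\in\Lambda}|\langle W_{\{s\}},e_\lambda\rangle|^2$. By Lemma~\ref{lem:zero-completed-single} and the remark after it, each summand equals $|\widehat{\mu_{\rho,d}}(\lambda)|^2|u_s(\lambda)|^2$ for every $\lambda$, with the convention $u_s=0$ on the zero set of $\widehat{\mu_{\rho,d}}$. The sum is therefore exactly $\int|u_s|^2\,d\eta$, by the definition \eqref{eq:eta-def} of $\eta$ as a weighted counting measure. This is where the oddness of $B$ matters: at a pole $\lambda\in P_s$ the coefficient $n_s\prod_{j\ne s}m_j$ could otherwise be nonzero while $\eta$ gives no weight there. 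Lemma~\ref{lem:zero-completed-single} has already ruled this out.

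For the upper bound \eqref{eq:lag-product-bound}, apply the upper frame inequality to $W_{\{s-m,s\}}$. This gives $\sum_{\lambda}|\langle W_{\{s-m,s\}},e_\lambda\rangle|^2\le C$. At frequencies with $\widehat{\mu_{\rho,d}}(\lambda)\ne0$, all $m_j(\lambda)\ne0$, so \eqref{eq:walsh-quotient-formula} gives a summand equal to $|\widehat{\mu_{\rho,d}}(\lambda)|^2|u_{s-m}(\lambda)u_s(\lambda)|^2$, since $|q_j|=|u_j|$. At frequencies where $\widehat{\mu_{\rho,d}}(\lambda)=0$, the integrand $|u_{s-m}u_s|^2$ contributes nothing to the $\eta$-integral, because $\eta$ has no mass there. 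We only need the summand to be nonnegative, which it is. Hence
\[
\int|u_{s-m}u_s|^2\,d\eta=\sum_{\lambda\in\Lambda,\ \widehat{\mu_{\rho,d}}(\lambda)\ne0}|\langle W_{\{s-m,s\}},e_\lambda\rangle|^2\le\sum_{\lambda\in\Lambda}|\langle W_{\{s-m,s\}},e_\lambda\rangle|^2\le C.
\]
This inequality direction is the only one required, so no analysis of the zero set is needed for the two-coordinate packet.

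I expect the main obstacle to be the bookkeeping at zeros of $\widehat{\mu_{\rho,d}}$ in the lower bound. There an equality between the frame sum and the $\eta$-integral is genuinely needed, and this is exactly what Lemma~\ref{lem:zero-completed-single} supplies, using $s>m$ and $B$ odd. The rest is direct substitution. Finiteness of the integrals follows from the frame bounds themselves, because the sums are of nonnegative terms over the countable set $\Lambda$.
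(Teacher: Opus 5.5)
Your proposal is correct and matches the paper's proof. You test the upper frame bound on $W_{\{s-m,s\}}$ and discard the $\eta$-null zero set of $\widehat{\mu_{\rho,d}}$, which coincides with $P$. You then test the lower frame bound on $W_{\{s\}}$, using Lemma~\ref{lem:zero-completed-single} to get exact equality of the frame sum with $\int|u_s|^2\,d\eta$.
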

\begin{proof}
We begin by applying the upper frame inequality to the two-coordinate Walsh packet $W_{\{s-m,s\}}$.  Since $\|W_{\{s-m,s\}}\|_{L^2(\mu_{\rho,d})}=1$, Lemma~\ref{lem:walsh-product}, together with $\eta(P)=0$, gives
\[
\begin{aligned}
  \int_{\R}|u_{s-m}(\lambda)u_s(\lambda)|^2\,d\eta(\lambda)
  &=\sum_{\lambda\in\Lambda\setminus P}
    |\langle W_{\{s-m,s\}},e_\lambda\rangle|^2\\
  &\le\sum_{\lambda\in\Lambda}
    |\langle W_{\{s-m,s\}},e_\lambda\rangle|^2
  \le C.
\end{aligned}
\]
This proves \eqref{eq:lag-product-bound}.

For the single-coordinate Walsh packet $W_{\{s\}}$, Lemma~\ref{lem:zero-completed-single}, the preceding remark, and the lower frame inequality give
\[
\begin{aligned}
  A
  &\le\sum_{\lambda\in\Lambda}
    |\langle W_{\{s\}},e_\lambda\rangle|^2\\
  &=\sum_{\lambda\in\Lambda}
    |\widehat{\mu_{\rho,d}}(\lambda)|^2|u_s(\lambda)|^2
  =\int_{\R}|u_s(\lambda)|^2\,d\eta(\lambda).
\end{aligned}
\]
This proves \eqref{eq:single-lower-bound}.
\end{proof}

\begin{proof}[\bf Proof of Theorem~\ref{thm:level-shift-main}]
Suppose, to the contrary, that $\Lambda$ is a Fourier frame spectrum for $\mu_{\rho,d}$ with lower and upper frame bounds $A$ and $C$, respectively, and let $\eta$ be the frame-induced measure defined in \eqref{eq:eta-def}. Condition~\ref{ass:pointwise} has already been established. Since $P_s\cup P_{s-m}\subset P$ and $\eta(P)=0$, Proposition~\ref{prop:level-shift-dynamics} gives condition~\ref{ass:propagation}. Moreover, \eqref{eq:lag-product-bound} gives condition~\ref{ass:product}. Hence Theorem~\ref{thm:abstract-lag} yields
\[
  \int_{\R}|u_s(\lambda)|^2\,d\eta(\lambda)
  \longrightarrow0
  \qquad\text{as }s\to\infty.
\]
On the other hand, \eqref{eq:single-lower-bound} gives
\[
  \int_{\R}|u_s(\lambda)|^2\,d\eta(\lambda)\ge A,
  \qquad s>m,
\]
which is a contradiction. Therefore $L^2(\mu_{\rho,d})$ admits no Fourier frame.
\end{proof}

\section{Scope and the remaining problem}\label{sec:scope}

The parameter range in Theorem~\ref{thm:level-shift-main} is $\rho=B^{-1/m}$, where $B\ge3$ is odd and $B>2^m$. The last inequality is equivalent to $0<\rho<1/2$.  The case $m=1$ gives the odd reciprocal-integer scales. The case $m>1$ includes a family of Bernoulli convolutions with non-integer contraction ratios that admit no Fourier frames. This motivates further investigation into the existence of Fourier frames for Bernoulli convolutions in the remaining parameter regimes, analogous to existing work on spectrality for such measures.

The assumption $0<\rho<1/2$ is essential to guarantee the symbolic coding map is injective as in Lemma~\ref{lem:coding-injective}, so that Walsh packets on $\Omega$ are well-defined functions in $L^2(\mu_{\rho,d})$.  For $\rho>1/2$, the coding has overlaps, and the finite-coordinate Walsh packets would have to be replaced by their conditional expectations with respect to the full random series.  This breaks the clean product factorization of Fourier coefficients central to our argument.

The condition $\rho^{-m}=B$ with odd $B$  is essential for the present method.  It supplies the exact relation $\theta_{s-m}=B\theta_s$ , which is used  for large-magnitude propagation of tangent quotients, and for propagation of zero factors.  For a general contraction parameter $\rho$, such an exact identity need not exist.  Diophantine near-identities may occur, but they do not give the uniform pole propagation required by Theorem~\ref{thm:abstract-lag}.  Thus the theorem proves an obstruction in the odd reciprocal-power regime, while the remaining non-overlapping parameters require a different mechanism.

This nonexistence result suggests a natural extension of Dai's full spectrality classification for Bernoulli convolutions to the broader setting of  Fourier frames over non-overlapping self-similar measures.
 This is a conjecture rather than a consequence of the present method.
 
\begin{conjecture}
Let $0<\rho\le1/2$ and $d>0$.  Then $L^2(\mu_{\rho,d})$ admits a Fourier frame  if and only if $\rho=1/(2k)$ for some $k\in\N$.  Equivalently, in the non-overlapping range, the existence of an exponential Fourier frame should coincide with spectrality for Bernoulli convolutions.
\end{conjecture}

The obstruction arises from the interaction between the recursive mask factorization of the Bernoulli convolution Fourier transform and the upper and lower frame inequalities.

\begingroup
\setlength{\parskip}{0pt}

\bigskip

\noindent\textsc{Xiao-Ye Fu}\\
Hubei Key Laboratory of Mathematical Sciences, College of Mathematics and Statistics,
Central China Normal University, Wuhan, Hubei 430079, China\\
Email address: \texttt{xiaoyefu@ccnu.edu.cn}

\medskip

\noindent\textsc{Zi-Jian Song}\\
Hubei Key Laboratory of Mathematical Sciences, College of Mathematics and Statistics,
Central China Normal University, Wuhan, Hubei 430079, China\\
Email address: \texttt{songzijian2025@outlook.com}

\medskip

\noindent\textsc{Wei-Jie Wang}\\
Hubei Key Laboratory of Mathematical Sciences, College of Mathematics and Statistics,
Central China Normal University, Wuhan, Hubei 430079, China\\
Email address: \texttt{wwjmath@163.com}

\endgroup

\end{document}